\def\cal#1{\mathcal{#1}}
\def\cal{\mathcal}
\newcommand{\comment}[1]{}
\newcommand{\proba}{\mathbb P}
\newcommand{\esp}{{\mathbb E}}
\newcommand{\defe}{\mathrel{\mathop:}=}
\newcommand{\inv}{^{-1}}
\newcommand{\calB}{{\cal B}}
\def\B{{\mathbb B}}
\def\C{{\mathbb C}}
\def\E{{\mathbb E}}
\def\Z{{\mathbb Z}}
\newcommand{\eqnh}{\begin{eqnarray*}}
\newcommand{\eqne}{\end{eqnarray*}}
\newcommand{\eqnhn}{\begin{eqnarray}}
\newcommand{\eqnen}{\end{eqnarray}}
\newcommand{\equh}{\begin{equation}}
\newcommand{\eque}{\end{equation}}
\def\prodd#1#2#3{\prod_{#1 = #2}^{#3}}
\newcommand{\eqd}{\stackrel{\rm d}{=}}
\def\eqfdd{\stackrel{\rm f.d.d.}=}
\def\topp#1{^{(#1)}}
\def\ccbb#1{\left\{#1\right\}}
\def\sccbb#1{\{#1\}}
\def\spp#1{(#1)}
\def\pp#1{\left(#1\right)} %(\pp was defined differently...)
\def\bb#1{\left[#1\right]}
\def\qmand{\quad\mbox{ and }\quad}
\def\qmwith{\quad\mbox{ with }\quad}
\def\mfa{\mbox{ for all }}
\def\wt#1{\widetilde{#1}}
\def\wb#1{\overline{#1}}
\def\weakto{\Rightarrow}
\def\Z{{\mathbb Z}}
\def\R{{\mathbb R}}
\def\N{{\mathbb N}}
\newtheorem{Thm}{Theorem}[section]
\newtheorem{Prop}[Thm]{Proposition}
\theoremstyle{definition}
\newtheorem{Rem}[Thm]{Remark}
\numberwithin{equation}{section}
\newcommand{\RR}{\mathbb{R}}
\newcommand{\CC}{\mathbb{C}}
\newcommand{\eps}{\varepsilon}
\renewcommand{\comment}[1]{{\color{blue}\fbox{#1}}}
\newcommand{\longcommenthide}[1]{}
\title{The local structure of $q$-Gaussian processes}
\author{W\l odzimierz Bryc}
\address
{
W\l odzimierz Bryc\\
Department of Mathematical Sciences\\
University of Cincinnati\\
2815 Commons Way\\
Cincinnati, OH, 45221-0025, USA.
}
\email{wlodzimierz.bryc@uc.edu}
\author{Yizao Wang}
\address
{
Yizao Wang\\
Department of Mathematical Sciences\\
University of Cincinnati\\
2815 Commons Way\\
Cincinnati, OH, 45221-0025, USA.
}
\email{yizao.wang@uc.edu}
\begin{document}\sloppy

\date{\today. File: \jobname.tex}

\keywords{$q$-Brownian motion, $q$-Ornstein--Uhlenbeck process, inhomogeneous Markov process, tangent process, self-similar process, Cauchy process, free stable law, Biane's construction}
\subjclass[2010]{Primary, 60G17, 60F17; Secondary, 60J35}

\begin{abstract}
The local structure  of
$q$-Ornstein--Uhlenbeck process  and $q$-Brownian motion are investigated, for all $q\in(-1,1)$. These are classical Markov processes that arose from the study of noncommutative probability. These processes have discontinuous sample paths, and the local small jumps are characterized by tangent processes.
It is shown that for all $q\in(-1,1)$, the tangent processes at
the interior of the state space
are scaled Cauchy processes possibly with drifts. The tangent processes at the boundary of the
state space are also computed, but they are not well known processes in classical probability theory.
Instead, they can be associated to the free $1/2$-stable law,
 a well known distribution in free probability, via Biane's construction.
\end{abstract}
\maketitle

\section{Introduction}

In this paper, we investigate the trajectories, particularly the jumps, of certain Markov processes that recently have drawn %
interest from both classical and noncommutative probability communities. These processes, known as  $q$-Gaussian processes,
  arose from the intriguing connection between noncommutative probability and classical probability %
  described in the seminal work by \citet{bozejko97qGaussian}. Since then, the connection has motivated many advances on both Markov processes and their counterparts in noncommutative probability, see for example  \citep{bozejko06class,bryc05conditional,bryc01stationary,anshelevich13generators}.
   Here, we take the classic probability point of view, and we are interested in  the local structure of these Markov processes.

In particular, we focus on
 the so-called $q$-Ornstein--Uhlenbeck process and $q$-Brownian motion for  $q\in(-1,1)$.
The marginal distribution of the $q$-Ornstein--Uhlenbeck process is a symmetric probability measure supported on closed interval $-2/\sqrt{1-q}\leq x\leq 2/\sqrt{1-q}$ and has
  probability density function
 \equh\label{eq:p*}%
p(x) = \frac{\sqrt{1-q}\cdot(q)_\infty}{2\pi}\sqrt{4-(1-q)x^2}\prodd k1\infty\bb{(1+q^k)^2-(1-q)x^2q^k},
\eque
where $(q)_\infty = \prodd k1\infty(1-q^k)$.
This distribution is sometimes called the $q$-normal distribution and appears also  as the orthogonality measure of the   $q$-Hermite polynomials \citep[Section 13.1]{ismail09classical}.
The marginal distribution of the $q$-Brownian motion is just a dilation of \eqref{eq:p*} due to the relation \eqref{eq:OU-Bm} below.

 It is known that the $q$-normal distribution
 interpolates
between several important distributions as $q$ varies between $-1$ and $1$.
 When $q=0$ it becomes the celebrated
 Wigner semicircle law that plays a fundamental role in random matrix theory,
when  $q$ goes to $1$ it converges to the
 standard normal distribution that is ubiquitous in classical probability theory, and
when $q$ goes to $-1$ it converges to the symmetric discrete distribution on $\{\pm1\}$ which is sometimes called the Rademacher distribution.

Next, at the process level the transition probabilities of the two Markov processes were identified in  \citep[Theorem 4.6]{bozejko97qGaussian}.
\citet[Section 4]{szablowski12qWiener}
pointed out that they are Feller  Markov processes with a %
{\em c\`adl\`ag}
(right-continuous and left-limit exists) version.
Throughout the paper we consider only {\em c\`adl\`ag} versions %
of stochastic processes.
 One can also easily show that
as $q$ goes to $1$ the two processes converge
in distribution
to Brownian motions and Ornstein--Uhlenbeck processes respectively.
This roughly says that the paths of $q$-Brownian motion
are close to those of
the Brownian motion for $q$ close to 1, as illustrated by Figure~\ref{fig:qBm}.
However, while it is well known that Brownian motions have almost surely continuous paths (e.g.~\citep{morters10brownian}),
 it has been a folklore that the trajectories of $q$-Brownian motions have jumps, as can also be seen from Figure~\ref{fig:qBm}. Our motivation is to
understand better these jumps, and hence also the trajectories of $q$-Gaussian processes.
\begin{figure}
\includegraphics[width=.7 \textwidth]{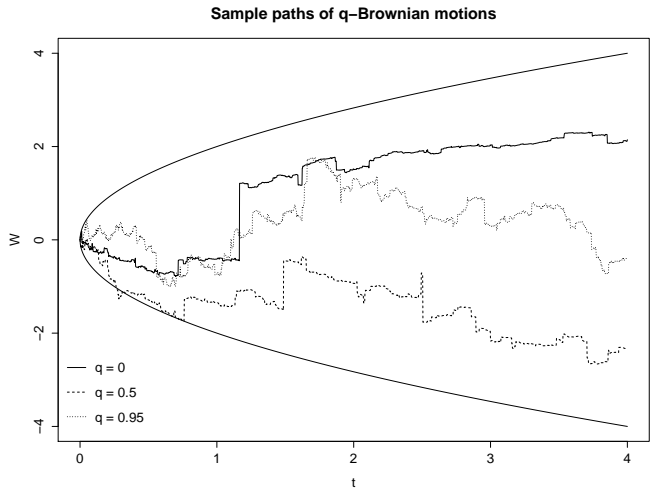}
\label{fig:qBm}  \caption{Three trajectories of discretized (in time) $q$-Brownian motions $\{W_{4i/n}^{(q)}\}_{i=0,\dots,n}$  with $q = 0, 0.5$ and $0.95$ respectively, with $n = 2000$, simulated by R. The solid parabolic line ($w^2=4t$) is the boundary of the
support
of the free Brownian motion ($q=0$).}
\end{figure}

In this paper, we use the notion of tangent process \cite{falconer03local} to characterize the local structure of $q$-Gaussian
processes, and confirm that while large jumps become unlikely for $q$ close to 1, see Remark \ref{rem:big-jumps},
 the two processes are locally approximated by    the Cauchy process  for every fixed $q<1$,
 with a possible drift and a multiplicative constant depending on $q$.
In order to accomplish our goal, we  modify slightly
a general framework  from  \citet{falconer03local} to allow for dependence on  location  at time $s\geq 0$. Namely,
   let
   $Z = \{Z_t\}_{t\ge0}$ be a {\em c\`adl\`ag} Markov process, for $s>0$ and $x$ in the support of
   random variable
   $Z_s$, let $\proba(\cdot\mid Z_s = x)$ be
     the law of the Markov process conditioning on $Z_s = x$,
  and we say that $\zeta=\{\zeta_t\}_{t\ge0}$ is a {\em tangent process of $Z$ at time $s$ and location $x$},
if under the law  $\proba(\cdot\mid Z_s = x)$ we have
weak convergence
\equh\label{eq:tangent}
\ccbb{\frac{Z_{s+\epsilon t}-Z_s}{\epsilon^\beta}}_{t\ge 0}\weakto
\{\zeta_t\}_{t\ge0}
\eque
as $\epsilon\downarrow 0$, for some $\beta>0$ appropriately chosen,  in $D([0,\infty))$ equipped with Skorohod topology.
While $Z_{s+\epsilon}$ converges to $Z_s$ in probability as $\epsilon\downarrow 0$ for a {\em c\`adl\`ag} process $Z$, the tangent process in~\eqref{eq:tangent} provides information on the rates and local fluctuations of the convergence.
To establish~\eqref{eq:tangent}, for the two processes it suffices to work with the conditional transition probability densities of $\{Z_t\}_{t\ge s}$ given $Z_s$, so that the left-hand side induces a unique probability measure on $D([0,\infty))$ \citep{szablowski12qWiener}.
When the tightness is difficult to establish, we consider only convergence of finite-dimensional distributions.

Our main results consist of
identifying tangent processes for both $q$-Brownian motions and $q$-Ornstein--Uhlenbeck processes, denoted by $W\topp q = \{W_t\topp q\}_{t\ge0}$  and $X\topp q = \{X_t\topp q\}_{t\in\R}$ respectively throughout the paper.
It is well known that for the same $q\in(-1,1)$, these two processes can be mapped onto each other by a deterministic  transformation
\equh\label{eq:OU-Bm}
\ccbb{X_t\topp q}_{t\in\R}= \ccbb{e^{-t}W\topp q_{e^{2t}}}_{t\in\R}.
\eque
It is more convenient to work with the $q$-Ornstein--Uhlenbeck process as it is a
stationary Markov process
on the state space
 $[-2/\sqrt{1-q},2/\sqrt{1-q}]$.
Our
 findings are summarized as follows.\medskip

 {(i)} For
 $q$-Ornstein--Uhlenbeck process, we first prove  that for all $q\in(-1,1)$,  the tangent process in~\eqref{eq:tangent} exists at all location $x\in(-2/\sqrt{1-q},2/\sqrt{1-q})$ with $\beta = 1$, and is a Cauchy process up to a multiplicative constant (Theorem~\ref{thm3.1}). In other words, locally the $q$-Ornstein--Uhlenbeck process
  behaves like a Cauchy process, for all $q\in(-1,1)$. It is somehow surprising to see that, although the
 local jumps disappear in the limit
 as $q\to 1$,
 they persist in such a qualitative manner.\medskip

{(ii)}
We investigate the tangent process of $q$-Ornstein--Uhlenbeck process at the left boundary point of the
state space
 $x_- = -2/\sqrt{1-q}$.
 In this case, the tangent process still exists as in~\eqref{eq:tangent}, but with scaling parameter $\beta =2$, and is a different Markov process (Proposition~\ref{prop:qOU2}).\medskip

{(iii)} The Markov process obtained as the tangent process at the boundary point seems to have not been well investigated in classical probability theory, to the best of our knowledge. Instead,
somehow unexpectedly,  we identify this process as the Markov process (up to a quadratic drift) associated to the free $1/2$-stable law via the construction of \citet{biane98processes},
after whom we name the process {\it $1/2$-stable Biane process} (Proposition~\ref{P:biane}). This connection is irrelevant to the path properties of the processes, but
it
is of its own interest.\medskip

{(iv)} For the $q$-Brownian motion, since it is not stationary and has inhomogeneous transition probabilities, the situation is slightly
more subtle.
The tangent process of the $q$-Brownian motion at
 the interior of the support of $W\topp q_s$
 is still Cauchy, but with a linear drift (Proposition~\ref{prop:qBm1}). The tangent process at time $s$ at the boundary of the
 support $x_- = -2\sqrt{s/(1-q)}$ this time however, instead of in the common form~\eqref{eq:tangent}, appears as the limit of
\[
\ccbb{\frac{W\topp q_{s+t\epsilon}+(s(1-q))^{-1/2}\cdot t\epsilon-W\topp q_s}{\epsilon^2}}_{t\ge0}
\]
as $\epsilon\downarrow 0$ under the law $\proba(\cdot\mid W_s\topp q = x_-)$ (Proposition~\ref{prop:qBm2}). The tangent process turns out to be
the $1/2$-stable Biane process up to a multiplicative constant.
\medskip

The paper is organized as follows.
Section~\ref{sec:tangent} establishes
limit theorems for the tangent processes at both inner and boundary points for both processes.
The connection
to noncommutative probability, and particularly the identification of
the $1/2$-stable Biane process, are provided in Section~\ref{sec:biane} in a self-contained manner. %

\section{Convergence to tangent processes}\label{sec:tangent}
We first introduce the two processes that appear, with appropriate scalings and drifts, as the tangent processes of $q$-Gaussian processes.
Both processes are Markov processes. The first is Cauchy process (symmetric 1-stable L\'evy process), starting from $0$  with transition probability density
\[
f\topp{1}_{t_1,t_2}(y_1,y_2) =
\frac{1}{\pi}\frac{t_2-t_1}{(y_2-y_1)^2+(t_2-t_1)^2}, \;  0\leq t_1<t_2<\infty,y_1,y_2\in\R.
\]
The second also starts from 0, and has transition probability  density
\begin{multline}\label{eq:biane}
f^{(1/2)}_{t_1,t_2}(y_1,y_2)%
= \frac{(t_2-t_1)\sqrt{4y_2-t_2^2}}{2\pi\bb{(y_2-y_1)^2-(t_2-t_1)(t_1y_2-t_2y_1)}},\\ 0\leq t_1<t_2<\infty, y_1>\frac{t_1^2}4, y_2>\frac{t_2^2}4.
\end{multline}
Note that the
support
of the second process at time $t$ is $[t^2/4,\infty)$.
The two processes are denoted by $\Z\topp\alpha = \{\Z\topp\alpha_t\}_{t\ge0}$ with $\alpha = 1,1/2$ respectively, and the marginal distributions are given at~\eqref{eq:Cauchy} and~\eqref{half-stable} below. Both processes are self-similar with parameter $1/\alpha$ in the sense that
\equh\label{eq:SS}
\ccbb{\Z\topp\alpha_{\lambda t}}_{t\ge0} \eqfdd \lambda^{1/\alpha}\ccbb{\Z\topp\alpha_t}_{t\ge0}, \lambda>0, \alpha = 1,1/2.
\eque
Furthermore, $\Z\topp1$ has independent and stationary increments as a L\'evy process. The process $\Z\topp{1/2}$ has non-stationary increments, but with a drift and time scaling $\{\Z\topp{1/2}_{2t}-t^2\}_{t\ge0}$ is self-similar with time-homogeneous transition probability density
\equh\label{eq:p1/2}
p\topp{1/2}_{t_1,t_2}(y_1,y_2) =
\frac{2 \left(t_2-t_1\right)  \sqrt{y_2}}{\pi
  \bb{(y_2-y_1)^2+2 (y_1+y_2)(t_2-t_1)^2+(t_2-t_1)^4}}, y_1,y_2,t_1,t_2>0.
\eque

Both processes also arise from free probability.
In particular, $\Z\topp 1$ and $\Z\topp{1/2}$ are the Markov processes associated to free $1$-stable and $1/2$-stable semigroups respectively. For the sake of simplicity,
we call $\Z\topp{1/2}$ the {\it $1/2$-stable Biane process} in the sequel. We explain this connection to free probability in the Section~\ref{sec:biane}. The discussion there is independent from the rest of %
this section,
but is of its own interest.

Below, we first %
consider the tangent processes first %
 $q$-Ornstein--Uhlenbeck processes and then of $q$-Brownian motions.
\subsection{Tangent processes of $q$-Ornstein--Uhlenbeck processes}\label{sec:qOU}
Fix $q\in(-1,1)$, and  let $X\topp q= \{X\topp q_t\}_{t\in\R}$ denote
a $q$-Ornstein--Uhlenbeck process.
That is, $X\topp q$ is a stationary Markov process with {\em c\`adl\`ag} trajectories, with
the marginal probability density function $p(x)$  given by \eqref{eq:p*}, and with the  transition probability density function $p_{s,t}(x,y)$
given by, for $x,y\in[-2/\sqrt{1-q},2/\sqrt{1-q}]$,
\equh\label{eq:pst}
p_{s,t}(x,y) = (e^{-2(t-s)};q)_\infty \prodd k0\infty\frac1{\varphi_{q,k}(t-s,x,y)} \cdot p(y),
\eque
with
\[
\varphi_{q,k}(\delta,x,y) = (1-e^{-2\delta}q^{2k})^2 - (1-q)e^{-\delta}q^k(1+e^{-2\delta}q^{2k})xy + (1-q)e^{-2\delta}q^{2k}(x^2+y^2).
\]
Here and below, we write
\[
(a;q)_\infty := \prodd k0\infty(1-aq^k),\mfa  a\in\R, q\in(-1,1).
\]
The above densities can be found at \citep[Corollary 2]{bryc05probabilistic} and \citep[Eq.(2.9)]{szablowski12qWiener}.

Two bounds on $\varphi_{q,k}$ are useful. First, observe that $\varphi_{q,k}(\delta,x,y) = a(x^2+y^2)-bx + c$ has a quadratic form with $b/2a>1$. Thus,
\[
\min_{|x|,|y|\leq \frac2{\sqrt{1-q}}}\varphi_{q,k}(\delta,x,y) = (1-e^{-\delta}q^k)^4\geq (1-|q|^k)^4, k\in\N_0, \delta>0.
\]
At the same time,
\begin{align}
\varphi_{q,0}(\delta,x,y) & = (1-e^{-2\delta})^2 - (1-q)e^{-\delta}(1+e^{-2\delta})xy + (1-q)e^{-2\delta}(x^2+y^2)\nonumber\\
& = e^{-2\delta}\bb{4\sinh^2(\delta) + (1-q)(x-y)^2 + 2(1-q)xy(1-\cosh(\delta))}\label{eq:varq0}\\
& \geq e^{-2\delta}\bb{4\sinh^2(\delta) + 8(1-\cosh(\delta)) + (1-q)(x-y)^2} \nonumber\\
& =  e^{-2\delta}\bb{16\sinh^4(\delta/2) + (1-q)(x-y)^2},\nonumber
\end{align}
where in the inequality above we used the fact that $1-\cosh(\delta)\leq 0$. In particular,
\equh\label{eq:minphi}
\prodd k0\infty\frac1{\varphi_{q,k}(\delta,x,y)}\leq \frac{e^{2\delta}}{[16\sinh^4(\delta/2)+(1-q)(x-y)^2] (|q|)_\infty^4}.
\eque

We first look at the tangent process at the
interior of the state space.
Consider the process
\[
Y\topp\epsilon_t \defe \frac{X\topp q_{\epsilon t}-X\topp q_0}{\epsilon}.
\]

\begin{Thm}\label{thm3.1}
For all %
 $q\in(-1,1)$, $x\in(-2/\sqrt{1-q},2/\sqrt{1-q})$,
under
$\proba(\cdot\mid X_0\topp q=x)$, we have
\[
\ccbb{Y\topp\epsilon_t}_{t\in[0,\infty)}\weakto c_{q,x}\ccbb{\Z\topp 1_t}_{t\in[0,\infty)} \qmwith c_{q,x} = \sqrt{\frac4{1-q}-x^2}
\]
in $D([0,\infty))$ as $\epsilon\downarrow 0$,
where $\Z\topp1$ is the Cauchy process.
\end{Thm}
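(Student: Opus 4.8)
The plan is to exploit the Markov structure and reduce the statement to two standard ingredients: convergence of finite-dimensional distributions and tightness in $D([0,\infty))$. Conditionally on $X\topp q_0=x$, the rescaled process $Y\topp\epsilon$ is a time-homogeneous (but not space-homogeneous) Markov process started deterministically at $Y\topp\epsilon_0=0$, matching $c_{q,x}\Z\topp1_0=0$. Via the change of variables $X\topp q_{\epsilon t}=x+\epsilon Y\topp\epsilon_t$, its one-step transition density is
\[
g\topp\epsilon_{t_1,t_2}(y_1,y_2)=\epsilon\, p_{0,\epsilon(t_2-t_1)}(x+\epsilon y_1,\,x+\epsilon y_2),
\]
with $p_{s,t}$ given by~\eqref{eq:pst}. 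Since $\Z\topp1$ is stochastically continuous, it then suffices to show that the joint densities of $Y\topp\epsilon$ converge pointwise to those of $c_{q,x}\Z\topp1$ and that $\{Y\topp\epsilon\}_\epsilon$ is tight.

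For the finite-dimensional distributions I would analyze $g\topp\epsilon_{t_1,t_2}$ as $\epsilon\downarrow0$, writing $\delta=\epsilon(t_2-t_1)$. Three asymptotics drive the computation: first, $(e^{-2\delta};q)_\infty\sim 2\delta\,(q)_\infty$; second, for each $k\ge1$ the factor $\varphi_{q,k}(\delta,x+\epsilon y_1,x+\epsilon y_2)\to(1-q^k)^2[(1+q^k)^2-(1-q)q^k x^2]$, and the product of these over $k\ge1$ is exactly the combination appearing in $p(x)$ through~\eqref{eq:p*}; third, the critical factor $k=0$, where the representation~\eqref{eq:varq0} and the expansions $\sinh\delta\sim\delta$, $1-\cosh\delta\sim-\delta^2/2$ give
\[
\varphi_{q,0}(\delta,x+\epsilon y_1,x+\epsilon y_2)\sim\epsilon^2(1-q)\bb{c_{q,x}^2(t_2-t_1)^2+(y_2-y_1)^2}.
\]
Collecting these and using $4-(1-q)x^2=(1-q)c_{q,x}^2$, all powers of $\epsilon$ and all factors $(q)_\infty$ and $p(x)$ cancel, leaving precisely the transition density $c_{q,x}^{-1}f\topp1_{t_1,t_2}(y_1/c_{q,x},\,y_2/c_{q,x})$ of $c_{q,x}\Z\topp1$. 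Because each factor is a genuine probability density in its forward variable, pointwise convergence of the product of transition densities upgrades to total-variation convergence by Scheff\'e's lemma, yielding weak convergence of all finite-dimensional distributions.

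For tightness the obstacle is that the limit is a pure-jump process with no finite moments, so the usual moment criteria are unavailable; instead I would invoke Aldous's criterion. Marginal tightness is immediate from the finite-dimensional convergence. For the oscillation control, the strong Markov property reduces the estimate at a bounded stopping time $\tau$ to the transition kernel, and the uniform bound~\eqref{eq:minphi} together with $\sup_y p(y)<\infty$ gives, for $\delta=\epsilon\theta$,
\[
g\topp\epsilon_{\tau,\tau+\theta}(y_1,y_2)\leq\frac{C\,\theta}{\epsilon^2\theta^4+(1-q)(y_2-y_1)^2}
\]
uniformly in the starting point $y_1$ over the admissible range. Integrating over $\{|y_2-y_1|>\eta\}$ and substituting $u=y_2-y_1$ produces a Cauchy-type tail $\int_{|u|>\eta}u^{-2}\,du$, hence $\proba(|Y\topp\epsilon_{\tau+\theta}-Y\topp\epsilon_\tau|>\eta\mid\calF_\tau)\leq C'\theta/\eta$, which tends to $0$ as $\theta\downarrow0$ uniformly in $\epsilon$ and in $\tau$, verifying Aldous's condition on each $[0,T]$.

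Combining the finite-dimensional convergence with tightness gives weak convergence in $D([0,T])$ for every $T$, and therefore in $D([0,\infty))$. I expect the main difficulty to be the tightness step: one must extract the bound uniformly over all admissible starting locations (including those approaching the boundary of the support, where~\eqref{eq:minphi} remains valid even though the local density asymptotics degenerate) and confirm that the $\epsilon^2\theta^4$ term in the denominator is genuinely negligible, so that the probability of a large oscillation is governed by the Cauchy tail rather than by the small-$\delta$ correction. The finite-dimensional part, by contrast, is essentially a careful bookkeeping of $q$-Pochhammer asymptotics, which is delicate but routine once the $k=0$ factor is isolated as the single source of the Cauchy scaling.
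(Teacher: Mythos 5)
Your proposal is correct and takes essentially the same route as the paper: pointwise convergence of the rescaled transition densities (the $(e^{-2\epsilon\delta};q)_\infty\sim 2\epsilon\delta\,(q)_\infty$ asymptotic, the $k\ge1$ product converging after domination, and the $k=0$ factor via \eqref{eq:varq0} producing the Cauchy kernel), upgraded to finite-dimensional convergence by Scheff\'e, with tightness extracted from the uniform bound \eqref{eq:minphi}. The only cosmetic difference is the packaging of tightness — you verify Aldous's criterion via the tail estimate $\proba(|Y\topp\epsilon_{\tau+\theta}-Y\topp\epsilon_\tau|>\eta\mid\calF_\tau)\leq C\theta/\eta$, while the paper checks the equivalent Ethier--Kurtz condition $\esp_x(|Y\topp\epsilon_{t_2}-Y\topp\epsilon_{t_1}|^2\wedge 1\mid Y\topp\epsilon_{t_1}=y_1)\leq C_{T,q}(t_2-t_1)$ from the very same kernel bound — and your flagged worry about the $\epsilon^2\theta^4$ term is moot, since it is nonnegative and can simply be dropped when bounding the tail integral.
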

\begin{proof}
For $x\in(-2/\sqrt{1-q},2/\sqrt{1-q})$, let
 $\wt p_{t_1,t_2}\topp{\epsilon,x}(y_1,y_2)$ denote the transition probability density function of $Y\topp\epsilon$, conditioning on $X_0\topp q = x$. Then, writing $\delta:=t_2-t_1$,
\begin{multline}\label{eq:pdfY}
\wt p\topp{\epsilon,x}_{t_1,t_2}(y_1,y_2) = p_{\epsilon t_1,\epsilon t_2}(x+y_1\epsilon, x+y_2\epsilon)\epsilon\\
= \frac{\epsilon\sqrt{1-q}\cdot(e^{-2\epsilon\delta};q)_\infty(q)_\infty}{2\pi}
\frac{\sqrt{4-(1-q)(x+y_2\epsilon)^2}}{\varphi_{q,0}(\epsilon\delta,x+y_1\epsilon,x+y_2\epsilon)}\prodd k1\infty \frac{\psi_{q,k}(x+y_1\epsilon)}{\varphi_{q,k}(\epsilon\delta,x+y_1\epsilon,x+y_2\epsilon)}
\end{multline}
with $\psi_{q,k} (x)= (1+q^k)^2-(1-q)x^2q^k$. We factorize $\wt p_{t_1,t_2}\topp{\epsilon,x}$ in this way because in the
analysis below when computing the limiting probability densities, the infinite product is easy to deal with and contributes
asymptotically only a constant, while the square-root term and $\varphi_{q,0}$ contribute to the limiting density and are
treated separately. This pattern of calculations will repeatedly show up in all derivations of tangent processes below.
\medskip

 (i) We first prove the convergence of finite-dimensional distributions.   For this purpose, by Scheff\'e's theorem (\cite[Theorem 16.12]{billingsley95probability}) it suffices to prove pointwise convergence of joint probability densities.
In particular, we  prove
\equh\label{eq:ptwise}
\lim_{\epsilon\downarrow0}\wt p_{t_1,t_2}\topp{\epsilon,x}(y_1,y_2) = f\topp1_{t_1,t_2}\pp{\frac{y_1}{c_{q,x}},\frac{y_2}{c_{q,x}}}\frac1{c_{q,x}}.
\eque
  Write $\delta:=t_2-t_1$. Observe that as $\epsilon\downarrow0$,
recalling~\eqref{eq:varq0},
\[
\varphi_{q,0}(\epsilon\delta,x+y_1\epsilon,x+y_2\epsilon) %
\sim \epsilon^2\bb{(1-q)(y_2-y_1)^2 + \delta^2(4-(1-q)x^2)},
\]
and
\begin{align*}
\prodd k1\infty & \frac{\psi_{q,k}(x+y_1\epsilon)}{\varphi_{q,k}(\epsilon\delta, x+y_1\epsilon, x+y_2\epsilon)} \\
& \sim \prodd k1\infty \frac{(1+q^k)^2 - (1-q)x^2q^k}{(1-q^{2k})^2-(1-q)q^k(1+q^{2k})x^2+2(1-q)q^{2k}x^2}\\
& = \prodd k1\infty \frac{(1+q^{k})^2 - (1-q)x^2q^k}{(1-q^k)^2(1+q^k)^2 - (1-q)x^2q^k(1-q^k)^2} = \prodd k1\infty\frac1{(1-q^k)^2}.
\end{align*}
Here $y_1,y_2\in\R$ are fixed.  To pass to the limit, we use the fact that $|q|<1$ and $|x|<2/\sqrt{1-q}$ so the product is bounded  by a convergent product uniformly  over all small enough $\epsilon$. Small enough means  that  $\sqrt{1-q}|x+y_1\epsilon|<2$ and
 $\sqrt{1-q}|x+y_2\epsilon|<2$.

Finally, we note that
$$(e^{-2\epsilon\delta};q)_\infty=(1-e^{-2\epsilon\delta})\prod_{k=1}^\infty(1-e^{-2\epsilon\delta}q^k) \sim 2
 \epsilon \delta \prod_{k=1}^\infty(1- q^k)= 2
 \epsilon \delta (q)_\infty.$$
Combining all the calculation above, we have thus shown
\[
\lim_{\epsilon\downarrow0} p_{\epsilon t_1,\epsilon t_2}(x+y_1\epsilon,x+y_2\epsilon)\epsilon
= \frac{\delta\sqrt{1-q}\sqrt{4-(1-q)x^2}}{\pi[(1-q)(y_2-y_1)^2+\delta^2(4-(1-q)x^2)]},
\]
which is the same as~\eqref{eq:ptwise}.

Since our Markov processes start at $0$, the univariate densities also converge,  as they  are just the transition densities evaluated at $y_1=0$ and $t_1=0$.
We  have thus proved the convergence of finite-dimensional distributions.
\medskip

\noindent(ii) Next we prove the tightness.
We show that, for all $0\leq t_1<t_2\leq T<\infty$, $\epsilon>0$, independent of $x$ and $y_1$,
\equh\label{eq:aldous}
\esp_x(|Y_{t_2}\topp\epsilon-Y_{t_1}\topp\epsilon|^2\wedge 1\mid Y_{t_1}\topp\epsilon = y_1) \leq C_{T,q}(t_2-t_1)
\eque
for some constant $C_{T,q}$ depending only on $T$ and $q$. Then, the tightness of the processes $\{Y\topp\epsilon\}_{\epsilon>0}$ under the measure $\proba_x$ follows from \citet[Chapter 3, Theorem 8.6, Remark 8.7]{ethier86markov}. In particular, conditions (8.29) and (8.33) therein are satisfied for our processes.

To prove~\eqref{eq:aldous},
recall~\eqref{eq:minphi}.
It then follows that there exists a constant $C_{T,q}$ such that for all $0\leq t_1<t_2\leq T, \epsilon>0$, uniformly in $x,y_1,\epsilon$,
\[
\wt p^{x,\epsilon}_{t_1,t_2}(y_1,y_2)\leq C_{T,q}\frac{t_2-t_1}{[(y_2-y_1)^2+16\sinh^4(\epsilon(t_2-t_1)/2)/(\epsilon^2(1-q))]},
\]
whence
\begin{multline*}
\esp_x(|Y_{t_2}\topp\epsilon-Y_{t_1}\topp\epsilon|^2\wedge 1\mid Y_{t_1}\topp\epsilon = y_1)\\
\leq C_{T,q}(t_2-t_1) + \int_{|z|>1}C_{T,q}(t_2-t_1)\frac1{z^2}dz = C_{T,q}(t_2-t_1).
\end{multline*}
We have thus proved~\eqref{eq:aldous} and the tightness.
\end{proof}

The proof of Theorem \ref{thm3.1} does not apply to the boundary %
points
$x=\pm 2/\sqrt{1-q}$. At the same time, as $x\to \pm 2/\sqrt{1-q}$,
we have
$c_{q,x}\to 0$. These observations  raise the question on the tangent process at the boundary, and suggest that for a non-degenerate limit to exist we need to work with a different scaling.
Consider the process
\[
\widetilde Y\topp\epsilon_t \defe \frac{X_{\epsilon t}-X_0}{\epsilon^2}.
\]
  Let $x_- = -2/\sqrt{1-q}$ denote the left boundary point.
 \begin{Prop}\label{prop:qOU2}
   \label{prop:3.2} For all $q\in(-1,1)$, under $\proba(\cdot\mid W_0\topp q = x_-)$,
   \[
  \ccbb{\wt Y\topp\epsilon_t}_{t\ge0}\stackrel{\rm f.d.d.}
  \Longrightarrow\frac4{\sqrt{1-q}}\ccbb{\Z_{t}\topp{1/2}-\frac{t^2}4}_{t\ge0}
   \]
as $\epsilon\downarrow 0$,   where $\Z\topp{1/2}$ is the $1/2$-stable Biane process %
with transition probability densities~\eqref{eq:biane}.

 \end{Prop}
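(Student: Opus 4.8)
The plan is to follow the scheme of the proof of Theorem~\ref{thm3.1}, now with the quadratic scaling $\beta=2$. Since only convergence of finite-dimensional distributions is asserted, it suffices, by Scheff\'e's theorem, to prove pointwise convergence of the joint transition densities, with no tightness argument needed. Writing $\delta=t_2-t_1$, the transition density of $\wt Y\topp\epsilon$ conditioned on $X_0\topp q=x_-$ is
\[
\wt p\topp{\epsilon,x_-}_{t_1,t_2}(y_1,y_2)=\epsilon^2\,p_{\epsilon t_1,\epsilon t_2}(x_-+\epsilon^2 y_1,\,x_-+\epsilon^2 y_2),
\]
and I would factor it exactly as in~\eqref{eq:pdfY}, isolating the prefactor $\epsilon^2\sqrt{1-q}\,(e^{-2\epsilon\delta};q)_\infty(q)_\infty/(2\pi)$, the square-root term, the factor $\varphi_{q,0}$, and the infinite product over $k\ge1$. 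The univariate densities are recovered as the specialization $t_1=0$, $y_1=0$ (consistent with $\wt Y\topp\epsilon_0=0$), and since both $\wt Y\topp\epsilon$ and the candidate limit are Markov, pointwise convergence of transition densities yields pointwise convergence of all joint densities.

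The Pochhammer factor again satisfies $(e^{-2\epsilon\delta};q)_\infty\sim 2\epsilon\delta(q)_\infty$, and the product over $k\ge1$ tends to a constant. At the boundary this constant comes from $\psi_{q,k}(x_-)=(1-q^k)^2$ and $\varphi_{q,k}(0,x_-,x_-)=(1-q^k)^4$ (both using $(1-q)x_-^2=4$), so the product converges to $\prod_{k\ge1}(1-q^k)^{-2}=(q)_\infty^{-2}$, cancelling the $(q)_\infty^2$ carried by the prefactors. The essential new feature is the square-root term: since $(1-q)x_-^2=4$, one has $4-(1-q)(x_-+\epsilon^2 y_2)^2\sim 4\sqrt{1-q}\,\epsilon^2 y_2$, whence $\sqrt{4-(1-q)(x_-+\epsilon^2 y_2)^2}\sim 2(1-q)^{1/4}\epsilon\sqrt{y_2}$; this vanishes to order $\epsilon$ and forces the limiting law to be supported on $y_2>0$.

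The hard part will be the analysis of $\varphi_{q,0}$. Starting from the identity~\eqref{eq:varq0}, at $x=x_-+\epsilon^2 y_1$, $y=x_-+\epsilon^2 y_2$ with time gap $\epsilon\delta$, the two leading $\epsilon^2\delta^2$ contributions — namely $4\sinh^2(\epsilon\delta)$ and $2(1-q)x_-^2(1-\cosh(\epsilon\delta))$ — cancel exactly. This cancellation is precisely the vanishing of the coefficient $4-(1-q)x_-^2$ that degenerated the interior-scale limit of Theorem~\ref{thm3.1}, and it is what forces the exponent $\beta=2$. I would therefore expand to order $\epsilon^4$, retaining the quartic terms in the Taylor expansions of $\sinh^2$ and $\cosh$ together with the $O(\epsilon^2)$ correction to the product $xy$; the computation should give
\[
\varphi_{q,0}(\epsilon\delta,x_-+\epsilon^2 y_1,x_-+\epsilon^2 y_2)\sim\epsilon^4\bb{(1-q)(y_2-y_1)^2+\delta^4+2\sqrt{1-q}\,\delta^2(y_1+y_2)}.
\]
This delicate next-order expansion is the technical heart of the argument.

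Collecting the powers of $\epsilon$ (they cancel, $\epsilon^2\cdot\epsilon\cdot\epsilon/\epsilon^4=1$) should then yield
\[
\lim_{\epsilon\downarrow0}\wt p\topp{\epsilon,x_-}_{t_1,t_2}(y_1,y_2)=\frac{2\delta(1-q)^{3/4}\sqrt{y_2}}{\pi\bb{(1-q)(y_2-y_1)^2+2\sqrt{1-q}\,\delta^2(y_1+y_2)+\delta^4}}.
\]
Finally I would identify this limit with the transition density of $\tfrac4{\sqrt{1-q}}\{\Z\topp{1/2}_t-t^2/4\}$ by applying the affine change of variables $z_i=\tfrac{\sqrt{1-q}}4 y_i+\tfrac{t_i^2}4$ to the Biane density~\eqref{eq:biane}: the Jacobian is $\sqrt{1-q}/4$, one checks $4z_2-t_2^2=\sqrt{1-q}\,y_2$, and an elementary but slightly tedious algebraic identity shows that the denominator bracket $(z_2-z_1)^2-(t_2-t_1)(t_1 z_2-t_2 z_1)$ equals $\tfrac1{16}\bb{(1-q)(y_2-y_1)^2+2\sqrt{1-q}\,\delta^2(y_1+y_2)+\delta^4}$. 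Substituting reproduces the displayed limit exactly (as a sanity check, at $q=0$ this matches~\eqref{eq:p1/2} via the self-similarity~\eqref{eq:SS}), which establishes the stated finite-dimensional convergence.
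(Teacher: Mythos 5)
Your proposal is correct and follows essentially the same route as the paper's proof: the same factorization of $p_{\epsilon t_1,\epsilon t_2}(x_-+y_1\epsilon^2,x_-+y_2\epsilon^2)\epsilon^2$ as in \eqref{eq:pdfY}, the asymptotics $(1-e^{-2\epsilon\delta})\sim 2\epsilon\delta$, the square-root term of order $\epsilon$, the cancellation of the $\epsilon^2\delta^2$ terms in \eqref{eq:varq0} forcing the order-$\epsilon^4$ expansion $\varphi_{q,0}\sim\epsilon^4[(1-q)(y_2-y_1)^2+2\sqrt{1-q}\,\delta^2(y_1+y_2)+\delta^4]$, and the infinite product contributing only the constant that cancels the Pochhammer factors. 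The only (cosmetic) deviation is the endgame: you identify the limit by substituting $z_i=\tfrac{\sqrt{1-q}}4 y_i+\tfrac{t_i^2}4$ directly into \eqref{eq:biane}, while the paper writes the limit as $f^{(1/2)}_{2t_1,2t_2}(\sqrt{1-q}\,y_1+t_1^2,\sqrt{1-q}\,y_2+t_2^2)\sqrt{1-q}$ and then invokes the self-similarity \eqref{eq:SS} — the two identifications agree, and incidentally your constant $(1-q)^{3/4}$ in the limiting numerator is the internally consistent one (the paper's intermediate displays $\sqrt{1-q}\sqrt{y_2}$ and $\sqrt{(1-q)y_2}$ carry a harmless typo for $(1-q)^{1/4}\sqrt{y_2}$ and $(1-q)^{3/4}\sqrt{y_2}$, respectively).
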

 \begin{Rem}
 Here and in Propositions~\ref{prop:qBm1} and~\ref{prop:qBm2}, we only
 prove the convergence of finite-dimensional distributions.
 \end{Rem}
  \begin{proof}
    The proof is similar to the first part of the proof of Theorem \ref{thm3.1} and consists of verification that the transition density converges. The transition density of $\{ \widetilde Y\topp\epsilon_t\}_{t\geq 0}$ is
    $p_{\epsilon t_1,\epsilon t_2}(x_-+y_1\epsilon^2, x_-+y_2\epsilon^2)\epsilon^2$. With $\delta=t_2-t_1$, the density factors   as in
    \eqref{eq:pdfY} with $x$ replaced by $x_-$, and we compute the corresponding terms one by one. The infinite product
$$     (q)_\infty\prodd k1\infty \frac{(1-q^ke^{-2\epsilon\delta}) \psi_{q,k}(x_-+y_1\epsilon^2)}{\varphi_{q,k}(\epsilon\delta,x_-+y_1\epsilon^2,x_-+y_2\epsilon^2)}
$$
converges again to $1$ as $\epsilon\downarrow0$, and the factor
\[
    \frac{\epsilon^2\sqrt{1-q}\cdot(1-e^{-2\epsilon\delta})}{2\pi}
\frac{\sqrt{4-(1-q)(x_-+y_2\epsilon^2)^2}}{\varphi_{q,0}(\epsilon\delta,x_-+y_1\epsilon^2,x_-+y_2\epsilon^2)}
 \]
contributes to the limit. As previously, $(1-e^{-2\epsilon\delta})\sim 2\epsilon\delta$, but at the boundary we have
 \[
 \sqrt{4-(1-q)(x_-+y_2\epsilon^2)^2}\sim 2\epsilon \sqrt{1-q}\sqrt{y_2},
 \]
 and
 \begin{multline*}
 \varphi_{q,0}(\epsilon\delta,x_-+y_1\epsilon^2,x_-+y_2\epsilon^2)\\
 \sim \epsilon^4 \bb{(1-q)(y_2-y_1)^2+2\sqrt{1-q}
   (y_1+y_2)(t_1-t_2)^2+(t_1-t_2)^4
 }.
 \end{multline*}
 It then follows that
 \begin{align*}
\lim_{\epsilon\downarrow0}p_{\epsilon t_1,\epsilon t_2}&(x_-+y_1\epsilon^2,x_-+y_2\epsilon^2)\epsilon^2 \\
&=
\frac{2 \left(t_2-t_1\right)  \sqrt{(1-q)y_2}}{\pi
  \bb{(1-q)(y_2-y_1)^2+2 \sqrt{1-q}   (y_1+y_2)(t_2-t_1)^2+(t_2-t_1)^4
 }}\\
& = f_{2t_1,2t_2}\topp{1/2}(\sqrt{1-q}\cdot y_1+t_1^2,\sqrt{1-q}\cdot y_2+t_2^2)\sqrt{1-q}, y_1,y_2>0.
 \end{align*}

 The desired result now follows from self-similarity~\eqref{eq:SS}.
  \end{proof}
  \begin{Rem}\label{rem:falconer}
Let $X = \{X_t\}_{t\ge0}$ be a general process. \citet{falconer03local} actually considers the annealed tangent process, $\{(X_{s+t\epsilon}-X_s)/\epsilon^\beta\}_{t\ge0}$ for $s\ge 0$, while we consider the quenched tangent process conditioning on the value of $X_s$. From our results, the annealed tangent process (without conditioning) can then be derived easily as a mixture of Cauchy process. We omit the details. The same applies to the tangent process of the $q$-Brownian motion
in Proposition~\ref{prop:qBm1}.

According to \citet{falconer03local}, for almost all time points $s$ at which the general process $X$ has a unique annealed tangent process, the tangent process must be self-similar with stationary increments. Here we have an example indicating that one cannot drop the `almost all' part of the statement. Indeed, fixing $\tau>0$ and considering $X_t = X\topp q_{\tau+t}$ with law $\proba(\cdot\mid X\topp q_\tau = x_-)$, we just showed that for this process at $s = 0$, the tangent process exists, is self-similar,  but has non-stationary increments. There is no contradiction since as discussed above, for any $s>0$ the annealed tangent process is a mixture of $\Z\topp 1$, and is thus self-similar with stationary increments.
\end{Rem}
\subsection{Tangent processes of $q$-Brownian motions}\label{sec:qBm}
In this section, consider the $q$-Brownian motion $\{W\topp q_t\}_{t\ge0}$ with transition probability density \citep[Eq.(55)]{bryc05conditional}
\equh
\label{eq:kappa}
\kappa_{t_1,t_2}\topp q(y_1,y_2)  %
 = \frac{(1-q)^{3/2}(t_2-t_1)}{2\pi}\frac{\sqrt{4t_2-(1-q)y_2^2}}{\varphi_{q,0}^*(t_1,t_2,y_1,y_2)}\prodd k1\infty\frac{\psi_{q,k}^*(t_1,t_2,y_2)}{\varphi_{q,k}^*(t_1,t_2,y_1,y_2)}
\eque
where
\[
\psi_{q,k}^*(t_1,t_2,y_2) = (t_2-t_1q^k)(1-q^{k+1})\bb{t_2(1+q^k)^2-(1-q)y_2^2q^k}, k\ge1
\]
and
\[
\varphi_{q,k}^*(t_1,t_2,y_1,y_2) = (t_2-t_1q^{2k})^2-(1-q)q^k(t_2+t_1q^{2k})y_1y_2+(1-q)(t_1y_2^2+t_2y_1^2)q^{2k}, k\ge0.
\]
We first consider the tangent process %
at the interior point of the support of $W\topp q_s$.
For $s>0$, $\epsilon>0$ consider the process
\[
\left\{V_t\topp{\epsilon,s}\right\}_{t\geq 0} := \left\{\frac{W\topp q_{s+t\epsilon} - W\topp q_s}{\epsilon}\right\}_{ t\ge 0}.
\]
\begin{Prop}\label{prop:qBm1}
For $s>0$ and $x\in(-2\sqrt{s/(1-q)}, 2\sqrt{s/(1-q)})$, under the law $\proba(\cdot \mid W\topp q_s = x)$,
\[
\ccbb{V_t\topp{\epsilon,s}}_{t\ge0}\stackrel{\rm f.d.d.}
\Longrightarrow
c_{q,s,x}\ccbb{ \Z\topp 1_t +   \frac x{\sqrt{\frac{4s}{1-q}-x^2}}\cdot t}_{t\ge0}
\]
as $\epsilon\downarrow0$,
where $\Z\topp 1$ is the  Cauchy process and
\[
c_{q,s,x} = \frac1{2s}\sqrt{\frac{4s}{1-q}-x^2}.
\]
\end{Prop}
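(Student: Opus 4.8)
The plan is to mirror the finite-dimensional part of the proof of Theorem~\ref{thm3.1}. Since the limit is only claimed at the level of finite-dimensional distributions, and these processes start deterministically at $0$ (so $V_0\topp{\epsilon,s}=0$), the Markov property reduces everything to one-step transition densities: the joint density of $(V_{t_1}\topp{\epsilon,s},\dots,V_{t_n}\topp{\epsilon,s})$ is a product of transition densities, and by Scheff\'e's theorem pointwise convergence of each factor yields convergence of finite-dimensional distributions. The transition density of $V\topp{\epsilon,s}$ from $(t_1,y_1)$ to $(t_2,y_2)$ is
\[
\wt\kappa\topp{\epsilon,s,x}_{t_1,t_2}(y_1,y_2) = \epsilon\,\kappa\topp q_{s+t_1\epsilon,\,s+t_2\epsilon}(x+y_1\epsilon,\,x+y_2\epsilon),
\]
the extra factor $\epsilon$ being the Jacobian of $y\mapsto x+y\epsilon$. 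First I would insert \eqref{eq:kappa} and factor this density exactly as in \eqref{eq:pdfY}: a prefactor carrying the time increment $(t_2-t_1)\epsilon$, a square-root term, the $\varphi_{q,0}^*$ term, and the infinite product $\prodd k1\infty \psi_{q,k}^*/\varphi_{q,k}^*$, treating each in turn.

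Writing $\delta=t_2-t_1$ and $t_i'=s+t_i\epsilon$, $y_i'=x+y_i\epsilon$, the prefactor together with the Jacobian contributes $(1-q)^{3/2}\delta\,\epsilon^2/(2\pi)$; the square root tends to $\sqrt{4s-(1-q)x^2}$, which is strictly positive exactly because $x$ lies in the interior of the support. The delicate term is $\varphi_{q,0}^*$, whose naive expansion carries $O(1)$ and $O(\epsilon)$ pieces that must cancel. The key simplification is to rewrite it as
\[
\varphi_{q,0}^*(t_1',t_2',y_1',y_2') = (t_2'-t_1')^2 + (1-q)(y_2'-y_1')(t_1'y_2'-t_2'y_1'),
\]
a form that makes the vanishing at $\epsilon=0$ manifest and gives $\varphi_{q,0}^* \sim \epsilon^2\bb{(1-q)s(y_2-y_1)^2 - (1-q)x\delta(y_2-y_1) + \delta^2}$. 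For the infinite product, each ratio converges to $(1-q^{k+1})/(1-q^k)$, so the product telescopes to $1/(1-q)$; a uniform domination, as in \eqref{eq:minphi} and using $|q|<1$ together with $|x|<2\sqrt{s/(1-q)}$, lets me pass to the limit term by term.

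Combining these, I expect
\[
\lim_{\epsilon\downarrow0}\wt\kappa\topp{\epsilon,s,x}_{t_1,t_2}(y_1,y_2) = \frac{\sqrt{1-q}\,\delta\,\sqrt{4s-(1-q)x^2}}{2\pi\bb{(1-q)s(y_2-y_1)^2 - (1-q)x\delta(y_2-y_1) + \delta^2}}.
\]
It then remains to identify the right-hand side as the transition density of $c_{q,s,x}\ccbb{\Z\topp1_t + \mu t}$ with $\mu = x/\sqrt{4s/(1-q)-x^2}$. Setting $a=c_{q,s,x}=\sqrt{4s-(1-q)x^2}/(2s\sqrt{1-q})$, the transition density of $a\Z\topp1_t + a\mu t$ is $\frac{a\delta}{\pi\bb{(y_2-y_1)^2 - 2\mu a\delta(y_2-y_1) + (1+\mu^2)a^2\delta^2}}$, and matching the two reduces to the identities $2\mu a = x/s$ and $(1+\mu^2)a^2 = 1/((1-q)s)$, both of which follow in one line from $x^2 + (4s/(1-q)-x^2) = 4s/(1-q)$. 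The univariate marginals are the special case $t_1=0$, $y_1=0$, so Scheff\'e's theorem then delivers the claimed convergence of all finite-dimensional distributions.

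The main obstacle is the asymptotics of $\varphi_{q,0}^*$: obtaining the exact $O(\epsilon^2)$ coefficient after the cancellation of the lower-order terms, and then verifying the algebraic match of this quadratic in $y_2-y_1$ with the denominator of the drifted Cauchy density through the two identities above. No tightness argument is needed here, since only finite-dimensional convergence is asserted.
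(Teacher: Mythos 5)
Your proposal is correct and follows essentially the same route as the paper's proof: rescale the transition density $\kappa^{(q)}$ with the Jacobian factor $\epsilon$, compute the limits of the square-root term, of $\varphi^*_{q,0}/\epsilon^2$, and of the telescoping infinite product (your closed form $\varphi^*_{q,0}=(t_2-t_1)^2+(1-q)(y_2-y_1)(t_1y_2-t_2y_1)$ is exactly the identity underlying the paper's stated asymptotics), then identify the limit as a drifted, scaled Cauchy transition density via Scheff\'e. The only cosmetic difference is that you match coefficients through the identities $2\mu a=x/s$ and $(1+\mu^2)a^2=1/((1-q)s)$, whereas the paper recognizes the limit as $f^{(1)}_{\tau_1,\tau_2}\pp{y_1-\tfrac{t_1x}{2s},\,y_2-\tfrac{t_2x}{2s}}$ with $\tau_j=c_{q,s,x}t_j$ and invokes self-similarity~\eqref{eq:SS} -- the same computation in different packaging.
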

\begin{proof}
The transition probability density of $V\topp{\epsilon,s}$ conditioning on $W_s\topp q = x$ is
\[
\kappa_{t_1,t_2}\topp{\epsilon,s,x}(y_1,y_2) := \kappa_{s+t_1\epsilon,s+t_2\epsilon}\topp q(x+y_1\epsilon,x+y_2\epsilon)\epsilon.
\]
 One can show for all fixed $x\in(-2\sqrt{s/(1-q)},2\sqrt{s/(1-q)})$,
\[
\lim_{\epsilon\downarrow0}\sqrt{4t_2-(1-q)y_2^2}=\sqrt{4s-(1-q)x^2},
\]
\begin{multline*}
\lim_{\epsilon\downarrow0}\frac{\varphi_{q,0}^*(s+t_1\epsilon,s+t_2\epsilon,x+y_1\epsilon,x+y_2\epsilon)}{\epsilon^2} \\
= (t_2-t_1)^2+(1-q)\bb{s(y_2-y_1)^2 - (t_2-t_1)(y_2-y_1)x},
\end{multline*}
and
\[
\lim_{\epsilon\downarrow0}\frac{\psi_{q,k}^*(s+t_1\epsilon,s+t_2\epsilon,x+y_2\epsilon)}{\varphi_{q,k}^*(s+t_1\epsilon,s+t_2\epsilon,x+y_1\epsilon,x+y_2\epsilon)} = \frac{1-q^{k+1}}{1-q^k}, k\ge 1.
\]
As previously, the infinite product converges uniformly in $\epsilon$ for all $\epsilon$ close enough to $0$.
It then follows that %
\begin{multline*}
\lim_{\epsilon\downarrow0}\kappa_{t_1,t_2}\topp{\epsilon,s,x}(y_1,y_2)
= \frac{1}{2\pi}\frac{\sqrt{1-q}(t_2-t_1)\sqrt{4s-(1-q)x^2}}{(t_2-t_1)^2+(1-q)[s(y_2-y_1)^2-(t_2-t_1)(y_2-y_1)x]}\\
=f\topp 1_{\tau_1,\tau_2}\pp{y_1-\frac{t_1x}{2s},y_2-\frac{t_2x}{2s}}
\end{multline*}
 with $\tau_j=\tau_j(q,s,x)=c_{q,s,x}t_j$
, $j=1,2$.
So the limiting process equals in
distribution
\[
\ccbb{\Z\topp 1_{c_{q,s,x}t}+\frac x{2s}t}_{t\ge0} \eqfdd c_{q,s,x}\ccbb{ \Z\topp 1_t +   \frac x{\sqrt{\frac{4s}{1-q}-x^2}}\cdot t}_{t\ge0}
\]
by self-similarity~\eqref{eq:SS}.
\end{proof}

Next we consider the tangent process at the boundary %
of the support.
Consider the left end-point $x_- = -2\sqrt{s/(1-q)}$ of the %
support
of the $q$-Brownian motion at time $s$, and the process
\equh\label{eq:atepsilon}
\wt V\topp{\epsilon,s}_t := \frac{W\topp q_{s+t\epsilon}+at\epsilon-W\topp q_s}{\epsilon^2} \qmwith a = -\frac2{(1-q)x_-} = \frac1{\sqrt{s(1-q)}}.
\eque
\begin{Prop}\label{prop:qBm2}
For all $s>0$, under the law $\proba(\cdot\mid W\topp q_s = x_-)$,
\[
\ccbb{\wt V\topp{\epsilon,s}_t}_{t\ge0}\stackrel{\rm f.d.d.}
\Longrightarrow
\frac1{s^{3/2}\sqrt{1-q}}\ccbb{\Z_t\topp{1/2}}_{t\ge0}
\]
as $\epsilon\downarrow 0$, where $\Z\topp{1/2}$ is the
 $1/2$-stable Biane process  %
with transition probability densities~\eqref{eq:biane}.
\end{Prop}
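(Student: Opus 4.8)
The plan is to follow the same density-convergence strategy already used in Theorem~\ref{thm3.1} and Proposition~\ref{prop:qOU2}. Since only finite-dimensional distributions are claimed (see the Remark preceding Proposition~\ref{prop:qOU2}), by Scheff\'e's theorem it suffices to prove pointwise convergence of the joint transition densities. The change of variables dictated by \eqref{eq:atepsilon} shows that the transition density of $\wt V\topp{\epsilon,s}$ conditioned on $W\topp q_s=x_-$ is
\[
\wt\kappa\topp{\epsilon,s}_{t_1,t_2}(y_1,y_2)=\kappa\topp q_{s+t_1\epsilon,\,s+t_2\epsilon}\pp{x_--at_1\epsilon+y_1\epsilon^2,\;x_--at_2\epsilon+y_2\epsilon^2}\epsilon^2,
\]
the Jacobian $\epsilon^2$ coming from the $\epsilon^2$ scaling of the spatial variable. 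Writing $T_j=s+t_j\epsilon$ and $Y_j=x_--at_j\epsilon+y_j\epsilon^2$, I would factor this density exactly as in \eqref{eq:kappa} and treat its four groups of factors separately, as in all earlier derivations.

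The routine factors come first. The prefactor supplies $T_2-T_1=(t_2-t_1)\epsilon$; using the relations $(1-q)x_-^2=4s$, $(1-q)x_-a=-2$ and $(1-q)a^2=1/s$, one finds $4T_2-(1-q)Y_2^2\sim\epsilon^2\bb{4\sqrt{s(1-q)}\,y_2-t_2^2/s}$, so that the square-root factor is of order $\epsilon$ and already pins down the support constraint $y_2>t_2^2/(4s^{3/2}\sqrt{1-q})$, matching that of $s^{-3/2}(1-q)^{-1/2}\Z\topp{1/2}$. For the infinite product, evaluating $\psi_{q,k}^*$ and $\varphi_{q,k}^*$ at $T_1=T_2=s,\ Y_1=Y_2=x_-$ and invoking $(1-q)x_-^2=4s$ collapses them to $s^2(1-q^k)^3(1-q^{k+1})$ and $s^2(1-q^k)^4$; each factor then tends to $(1-q^{k+1})/(1-q^k)$, and the product telescopes to $(1-q)^{-1}$. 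As in the earlier proofs, the passage to the limit inside the product is justified by domination by a convergent product, uniformly over small $\epsilon$.

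The crux, and where I expect the real work to lie, is the denominator $\varphi_{q,0}^*(T_1,T_2,Y_1,Y_2)$. Its $\epsilon$-expansion exhibits a delicate double cancellation: the $\epsilon^2$ terms vanish (which is precisely why the correct scaling is $\epsilon^2$ and not $\epsilon$), and the $\epsilon^3$ terms vanish as well, but only because of the exact value of the drift $a=1/\sqrt{s(1-q)}$ chosen in \eqref{eq:atepsilon}. A careful accounting of the quadratic part of $\varphi_{q,0}^*$ in the $Y_j$ then leaves
\[
\varphi_{q,0}^*\sim\epsilon^4(1-q)\bb{s(y_2-y_1)^2+a(t_2-t_1)(t_2y_1-t_1y_2)}.
\]
Tracking these two successive cancellations to fourth order, rather than any single sharp estimate, is the step most prone to error.

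Finally I would assemble the four asymptotics: the powers of $\epsilon$ balance exactly, $\epsilon\cdot\epsilon\cdot\epsilon^2/\epsilon^4=1$, producing a genuine limiting density. Setting $b=1/(s^{3/2}\sqrt{1-q})$ and using the identities $a=sb$ and $s^{3/2}\sqrt{1-q}=1/b$, the denominator above factors as $s\bb{(y_2-y_1)^2-b(t_2-t_1)(t_1y_2-t_2y_1)}$, and the limit simplifies to $b\,f\topp{1/2}_{t_1,t_2}(y_1/b,y_2/b)$, which is exactly the transition density of $b\,\Z\topp{1/2}$. Because the processes start at $0$, the one-dimensional densities are recovered by setting $t_1=0$, $y_1=0$, completing the identification of all finite-dimensional distributions with those of $s^{-3/2}(1-q)^{-1/2}\Z\topp{1/2}$.
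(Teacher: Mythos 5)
Your proposal is correct and takes essentially the same route as the paper's proof: the same factorization of $\kappa\topp q$ from \eqref{eq:kappa}, the same asymptotics for the prefactor, the square-root term (with the relations $(1-q)x_-^2=4s$, $(1-q)x_-a=-2$, $(1-q)a^2=1/s$), the infinite product telescoping to $(1-q)^{-1}$, and in particular the same order-$\epsilon^4$ expansion of $\varphi^*_{q,0}$ with the double cancellation at orders $\epsilon^2$ and $\epsilon^3$ enforced by the drift $a=1/\sqrt{s(1-q)}$, yielding exactly the paper's limit $f\topp{1/2}_{t_1,t_2}\pp{\sqrt{s^3(1-q)}\,y_1,\sqrt{s^3(1-q)}\,y_2}\sqrt{s^3(1-q)}$. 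The only blemish is a prefactor slip in your final display: the transition density of $b\,\Z\topp{1/2}$ is $b^{-1}f\topp{1/2}_{t_1,t_2}(y_1/b,y_2/b)$, not $b\,f\topp{1/2}_{t_1,t_2}(y_1/b,y_2/b)$, and with $b=1/(s^{3/2}\sqrt{1-q})$ your computed asymptotics do give the former, so the conclusion stands unchanged.
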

\begin{proof}
The transition probability density of $\wt V_t(\epsilon,s)$ under the law $\proba(\cdot\mid W\topp q_s = x_-)$ is
\[
\wb\kappa\topp{\epsilon,s}_{t_1,t_2}(y_1,y_2) = \kappa\topp q_{s+t_1\epsilon,s+t_2\epsilon}(x_--at_1\epsilon+y_1\epsilon^2,x_--at_2\epsilon+y_2\epsilon^2)\epsilon^2.
\]

Again from~\eqref{eq:kappa}, by straight-forward calculation one obtains as $\epsilon\downarrow0$,
\[
\sqrt{4(s+t_2\epsilon)-(1-q)(x_--at_2\epsilon+y_2\epsilon)^2} \sim \epsilon \sqrt{4\sqrt{s(1-q)}y_2-\frac{t_2^2}s},
\]
\begin{multline*}
\varphi_{q,0}^*(s+t_1\epsilon,s+t_2\epsilon,x_--at_1\epsilon+y_1\epsilon^2,x_--at_2\epsilon+y_2\epsilon^2)\\
\sim \epsilon^4\bb{s(1-q)(y_2-y_1)^2-\sqrt{\frac{1-q}s}(t_2-t_1)(t_1y_2-t_2y_1)},
\end{multline*}
and
\[
\lim_{\epsilon\downarrow0}\frac{\psi_{q,k}^*(s+t_1\epsilon,s+t_2\epsilon,x-a\epsilon+y_2\epsilon^2)}{\varphi_{q,k}^*(s+t_1\epsilon,s+t_2\epsilon,x-a\epsilon+y_1\epsilon^2,x-a\epsilon+y_2\epsilon^2)} = \frac{1-q^{k+1}}{1-q^k}, k\ge 1.
\]
Again, the infinite product of $\psi^*_{q,k}/\varphi^*_{q,k}$ converges uniformly for $\epsilon$ small enough as before. We thus arrive at
\begin{multline*}
\lim_{\epsilon\downarrow0}\wb\kappa\topp{\epsilon,s}_{t_1,t_2}(y_1,y_2) = \frac{\sqrt{1-q}}{2\pi}\frac{(t_2-t_1)\sqrt{4\sqrt{s(1-q)}y_2-{t_2^2}/s}}{s(1-q)(y_2-y_1)^2-\frac{\sqrt{1-q}}{\sqrt s}(t_2-t_1)(t_1y_2-t_2y_1)}\\
= f\topp{1/2}_{{t_1},{t_2}}\pp{\sqrt{s^3(1-q)}y_1,\sqrt{s^3(1-q)}y_2}\sqrt{s^3(1-q)}.
\end{multline*}
The desired result now follows.
\end{proof}
\begin{Rem}
%YZ8 revised the first paragraph here
  \label{rem:big-jumps} The tangent processes are established for fixed $q$, and they do not capture the behavior of large jumps
  as $q$ varies. To see what happens as $q$ approaches $1$,
we
 only mention here an  explicit estimate
\begin{equation}
  \label{EQ:large-jumps}
  %YZ8 \Pr and parenthesis size
  \proba\pp{\sup_{S\leq t\leq T}|W\topp q_{t-}-W\topp q_{t}|>a}\leq \frac{(1-q)}{a^4}(T^2-S^2), \quad 0\leq S<T, a>0,
\end{equation}
which
indicates that large jumps become unlikely when $q$ is close to $1$ or when the time interval $T-S$ is small.
However, the inequality only provides an upper bound.
A precise estimate of  the asymptotic probability of large jumps will be
established in the form of a Poisson
limit theorem  in another paper.
%\end{Rem}
%YZ8 removed the proof format:
%\begin{proof}

To prove \eqref{EQ:large-jumps} we use the formula
%YZ8 E vs \esp
  $$\esp(W\topp q_t-W\topp q_s)^4=(2+q)(t-s)^2+2(1-q)s(t-s), %YZ8
\quad  0\leq s<t.$$
   which can be read out from   \cite[formula (4.14)]{szablowski12qWiener}.
With   $t_i=S+i(T-S)/n$,   we have
  \begin{multline}\label{EQ:lim}
 %YZ8 \Pr and parenthesis size, typo corrected
 \proba %WB8 starting with i=1
 \pp{\max_{i=1,\dots,n}|W\topp q_{t_i}-W\topp q_{t_{i-1}}|>a}
 %\leq \sum_{i=1}^n\Pr(|W\topp q_{t_i}-W\topp q_{t_{i-1}}|>a)
 \leq \frac{1}{a^4}\sum_{i=1}^n\E(W\topp q_{t_i}-W\topp
 q_{t_{i-1}})^4\\
 =\frac{2+q}{a^4}\sum_{i=1}^n (t_i-t_{i-1})^2+\frac{2(1-q)}{a^4}\sum_{i=1}^n t_{i-1}(t_i-t_{i-1})\to \frac{2(1-q)}{a^4}\int_S^T
 t\, dt \mbox{ as $n\to\infty$}.
  \end{multline}
  %YZ8 here I am not sure how exactly you would prove. We can talk about it tomorrow.
For every trajectory,  $\max_{i=1,\dots,n}|W\topp q_{t_i}-W\topp q_{t_{i-1}}|$ converges to  $\sup_{S<t\leq T}|W\topp q_{t-}-W\topp q_{t}|$, because
  for every $\eps>0$  and every  a {\em c\`adl\`ag} function there exist a finite partition of $[0,T]$ into intervals on which the modulus of continuity is less than $\eps$ (see e.g. \cite{billingsley99convergence}). Since process $(W\topp q_t)$ is continuous in probability,
   $$\sup_{S<t\leq T}|W\topp q_{t-}-W\topp q_{t}|= \sup_{S\leq t\leq T}|W\topp q_{t-}-W\topp q_{t}|$$
   with probability one.  Thus
 \eqref{EQ:large-jumps} follows from \eqref{EQ:lim}.
%\end{proof}

\end{Rem}
\section{Connection to free probability}\label{sec:biane}
In this section, we explain how the tangent processes $\Z\topp{1/2},\Z\topp 1$ are connected to free probability. For this purpose, we first recall the notion of free convolution and free-convolution semigroup in free probability.
Free convolution of measures is a free-probability analog of the convolution of measures. While convolution describes the law of the sum of independent random variables, free convolution
  describes that law  of the sum of free noncommutative random variables.
  Both operations can also be introduced %
  analytically: convolution corresponds to  multiplication of the characteristic functions,
   and free convolution corresponds to addition of the so called the $R$-transforms.

To recall the analytic definition of free convolution,
denote by
$$G_\nu(z)=\int \frac{\nu(dx)}{z-x}$$ the
Cauchy-Stieltjes transform of a probability measure $\nu$ on the Borel sets of the real line.
It is known that $G_\mu$ is a well defined analytic function in the complex upper plane $z\in\CC^+=\{z=x+iy: y >0\}$ with  the right inverse
$K_\nu(z)=G^{-1}_\nu(z)$  which is well defined for  $z$ in a Stolz cone of the form $\{z=x+iy: |x|<\alpha y, |z|\leq \beta\}$. The $R$-transform %
of the probability measure $\nu$
 is then defined as
\begin{equation}\label{R def} R_\nu(z)=K_\nu(z)-1/z \end{equation} and the free convolution of %
two measures $\mu$ %
and $\nu$ is a (unique) probability measure, denoted %
by
$\mu\boxplus\nu$ with the $R$-transform
$R_{\mu}(z)+R_\nu(z)$ on the common domain.
These results,   %
 at increasing
levels of generality, have been established by  \citet{voiculescu86addition},  \citet{maassen92addition},  and   \citet{bercovici93free}.

 A free-convolution semigroup $\{\nu_t\}_{t\ge0}$  is the family of measures such that $\nu_{t+s}=\nu_t\boxplus\nu_s$, with $\nu_0 = \delta_0$ is a degenerate measure.
 For example, the family of Cauchy measures %
 \equh\label{eq:Cauchy}
\nu_t\topp 1(dx) = \frac{t}{\pi(x^2+t^2)}dx, t>0,
\eque
with Cauchy-Stieltjes transforms $G_{\nu_t\topp 1}(z)=1/(z+it)$ and  $R_{\nu_t\topp1}(z)=it$ on $\CC^+$, is a free-convolution semigroup, see \cite[Section 7]{bercovici93free} or \cite[Example 5.1]{biane98processes}.

 In the seminal paper \citep{biane98processes}, Biane associated to every free-convolution semigroup $\{\nu_t\}_{t\ge0}$ a classical Markov process $\{\Z_t\}_{t\ge0}$ such that the marginal distribution at time $t$ is $\nu_t$, and the transition probabilities
 $Q_{s,t}(x,dy)$ are determined as follows.
 Fix $s<t$ and $x\in\R$. Let $F$ be an analytic function on $\CC\setminus\RR$   such that
 \begin{equation}
   \label{Biane1}
  \int \frac{\nu_t(dx)}{z-x}=\int \frac{\nu_s(dx)}{F(z)-x} \mbox{ for $z\in\CC^+$ }.
 \end{equation}
 (Note that $F$ depends on $s<t$, but not $x$.)
 \citet{biane98processes} proved that such mapping exists and  is uniquely determined by the requirements that
 \begin{equation}
   \label{F-unique}
   \mbox{$F(\bar z)=\overline{F(z)}$, $F(\CC^+)\subset\CC^+$, $\Im F(z)\geq \Im z$ and $\lim_{y\to\infty}\frac{F(iy)}{iy}=1$}.
 \end{equation}
Furthermore, Biane showed that  $\CC^+\ni z\mapsto 1/(F(z)-x)\in\CC^-$ is a Cauchy--Stieltjes transform, so it defines a unique
probability measure  $Q_{s,t}(x,dy)$ such that
\begin{equation}
  \label{Biane2}
\int \frac{1}{z-y} Q_{s,t}(x,dy)=\frac{1}{F(z)-x}.
\end{equation}
The probability measures $\{Q_{s,t}(x,dy): s\leq t, x\in\RR\}$ satisfy Chapman--Kolmogorov equations, are Feller (i.e.~the map
$x\mapsto Q_{s,t}(x,dy)$ is weakly continuous) and $Q_{0,t}(0,dy)=\nu_t(dy)$; hence they are transition probabilities of a
Markov process, denoted by $\{\Z_t\}_{t\ge0}$.
We refer to the so-determined  Markov process $\{\Z_t\}_{t\ge0}$ as the {\it Biane process} associated to the free-convolution semigroup $\{\nu_t\}_{t\ge0}$.

Now recall the processes $\Z\topp1$ and $\Z\topp{1/2}$ described %
in
Section~\ref{sec:tangent}.
First, for the Cauchy process $\Z\topp 1$, it is well known that Cauchy distribution generates also the free $1$-stable semigroup
and by  \citep[Section 5.1]{biane98processes} the Cauchy process is indeed the Markov process associated to the free $1$-stable semigroup  \eqref{eq:Cauchy}.
So the Cauchy process is the 1-stable Biane process.
Second, the free 1/2-stable semigroup density appears in
\citet[page 1054]{Bercovici99stable}, see also \citep[Example 3.2]{perezabrew08free}.
The corresponding free-convolution semigroup of measures is then easily determined from rescaling, which gives
\begin{equation}\label{half-stable}
\nu_t\topp{1/2}(dx)=\frac{t\sqrt{4x-t^2}}{2\pi x^2}1_{(t^2/4,\infty)}dx, t>0.
\end{equation}
We show that $\Z\topp{1/2}$ defined by~\eqref{eq:biane} is the
Biane
 process
associated to $\{\nu\topp{1/2}_t\}_{t\ge0}$.

 \begin{Prop}\label{P:biane}The Biane process associated to~\eqref{half-stable} is $\Z\topp{1/2}$.
 \end{Prop}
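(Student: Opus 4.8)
The plan is to exploit the uniqueness built into Biane's construction: the kernels $Q_{s,t}(x,dy)$ are completely determined by the subordination function $F=F_{s,t}$ through \eqref{Biane1}--\eqref{Biane2}, and both $\Z\topp{1/2}$ and the Biane process start at $0$. So it suffices to exhibit $F_{s,t}$ for the semigroup $\{\nu_t\topp{1/2}\}$, verify the normalization \eqref{F-unique}, and check that \eqref{Biane2} reproduces the density \eqref{eq:biane}. The first ingredient is the Cauchy--Stieltjes transform of $\nu_t\topp{1/2}$. Since $\{\nu_t\topp{1/2}\}$ is a free-convolution semigroup that is self-similar of index $2$ (that is, $\nu_t\topp{1/2}$ is the law of $t^2 Y$ for $Y\sim\nu_1\topp{1/2}$), additivity of the $R$-transform combined with the dilation rule forces $R_{\nu_t\topp{1/2}}(z)=-itz^{-1/2}$, the sign of $i$ being fixed by requiring $G$ to map $\CC^+$ into $\CC^-$. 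Thus $K_{\nu_t\topp{1/2}}(w)=1/w-it\,w^{-1/2}$, and inverting $z=K_{\nu_t\topp{1/2}}(w)$ via $w^{-1/2}=v$ gives $G_{\nu_t\topp{1/2}}(z)=4/(\sqrt{4z-t^2}+it)^2$ with the principal branch; a Stieltjes inversion confirms this is the transform of \eqref{half-stable}, the denominator collapsing to $16x^2$.

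Next I would read off the subordination function. For a free-convolution semigroup, $G_{\nu_t}(z)=G_{\nu_s}(F(z))$ is solved by $F=K_{\nu_s}\circ G_{\nu_t}$, and because $K_{\nu_t}-K_{\nu_s}=(t-s)R_{\nu_1}$ this simplifies to
\[
F_{s,t}(z)=\frac{s}{t}\,z+\frac{t-s}{t}\,\frac{1}{G_{\nu_t\topp{1/2}}(z)}=z-\frac{t(t-s)}2+\frac{i(t-s)}2\sqrt{4z-t^2}.
\]
I would then check the four conditions in \eqref{F-unique}: $F_{s,t}(\bar z)=\overline{F_{s,t}(z)}$ is immediate, while $\Re\sqrt{4z-t^2}>0$ on $\CC^+$ yields both $F_{s,t}(\CC^+)\subset\CC^+$ and $\Im F_{s,t}(z)\ge\Im z$ (using $t>s$), and $\sqrt{4z-t^2}\sim 2\sqrt z$ gives $F_{s,t}(iy)/(iy)\to 1$. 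By Biane's uniqueness this is \emph{the} subordination function for $\{\nu_t\topp{1/2}\}$.

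Finally I would verify \eqref{Biane2} by Stieltjes inversion of $z\mapsto 1/(F_{s,t}(z)-x)$. For $y>t^2/4$ the boundary value has $\Im(F_{s,t}(y)-x)=\tfrac{t-s}2\sqrt{4y-t^2}$, so the recovered density equals $\tfrac1\pi\,\Im(F-x)/|F-x|^2$, and the identification reduces to the algebraic identity
\[
|F_{s,t}(y)-x|^2=\Big(y-x-\tfrac{t(t-s)}2\Big)^2+\tfrac{(t-s)^2}4(4y-t^2)=(y-x)^2-(t-s)(sy-tx),
\]
whose right-hand side is exactly the denominator in \eqref{eq:biane}; for $y<t^2/4$ the boundary value is real, so no mass lies below the edge $t^2/4$. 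Specializing to $s=0$, $x=0$ recovers $1/F_{0,t}(z)=G_{\nu_t\topp{1/2}}(z)$, which confirms $Q_{0,t}(0,dy)=\nu_t\topp{1/2}(dy)$; hence the two Markov processes have the same starting point and the same transition kernels, and the proof concludes.

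I expect the main obstacle to be the bookkeeping with the branch of the square root: pinning down the sign ($-i$ rather than $+i$) so that $G_{\nu_t\topp{1/2}}$ maps $\CC^+$ into $\CC^-$ and the density stays positive, and then verifying the Nevanlinna-type conditions \eqref{F-unique}, which is what actually singles out $F_{s,t}$ among solutions of \eqref{Biane1}. One should also confirm that the $R$-transform manipulations are legitimate for the heavy-tailed, non-compactly-supported measure $\nu_t\topp{1/2}$, which is covered by the Bercovici--Voiculescu theory on a truncated Stolz cone; the remaining algebra is routine.
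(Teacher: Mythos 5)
Your proof is correct, and it reaches the same three checkpoints as the paper --- the transform $G_t$, the subordination function $F$, and a Stieltjes inversion matching \eqref{eq:biane} --- but by a genuinely different route at each of the first two. For $G_t$, the paper evaluates the integral \eqref{CSK} by a contour/residue computation, while you derive it structurally: homogeneity of the semigroup under dilation plus additivity of the $R$-transform forces $R_{\nu_t}(z)=ctz^{-1/2}$, and inverting $K_{\nu_t}$ gives the closed form, which you then validate against the density \eqref{half-stable} by inversion (your expression $4/(\sqrt{4z-t^2}+it)^2$ agrees with the paper's \eqref{CSK+} since $\sqrt{t^2-4z}=-i\sqrt{4z-t^2}$ on $\CC^+$ with principal branches). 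For $F$, the paper solves $G_t(z)=G_s(F(z))$ ad hoc for real $z$ below the edge and continues analytically to obtain \eqref{F-sol}, whereas you use $F=K_{\nu_s}\circ G_{\nu_t}$ together with $K_{\nu_t}-K_{\nu_s}=(t-s)R_{\nu_1}$ to get the convex-combination identity $F_{s,t}(z)=\tfrac st z+\tfrac{t-s}{t}G_{\nu_t}(z)^{-1}$; expanding this reproduces \eqref{F-sol} exactly, and this derivation has the merit of explaining \emph{where} $F$ comes from. Crucially, your argument is rigorous independently of how $F$ was guessed, because once you verify \eqref{F-unique} and \eqref{Biane1} for the explicit formula, Biane's uniqueness does the rest --- so the worry you raise about $R$-transform manipulations for the heavy-tailed measure is moot. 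Your final step is also slightly stronger than the paper's: the paper changes variables to the drift-corrected process with density \eqref{eq:p1/2} and omits the inversion computation, while you invert $1/(F_{s,t}(z)-x)$ directly against \eqref{eq:biane}, supplying the key identity $|F_{s,t}(y)-x|^2=(y-x)^2-(t-s)(sy-tx)$ and checking that no mass lies below the edge $y=t^2/4$. One point of bookkeeping (which you anticipated): with the principal branch of $\sqrt{4z-t^2}$, your formula for $F_{s,t}$ is analytic on the \emph{wrong} slit plane and the symmetry $F(\bar z)=\overline{F(z)}$ is not literally ``immediate''; either restrict the formula to $\CC^+$ and define $F$ on $\CC^-$ by reflection, or rewrite it as $F_{s,t}(z)=z-\tfrac{t(t-s)}2-\tfrac{t-s}2\sqrt{t^2-4z}$ as in \eqref{F-sol}, which is real on $(-\infty,t^2/4)$ and makes the reflection property automatic. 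What the paper's route buys in exchange for more computation is self-containedness: its residue proof of \eqref{CSK} uses only elementary complex analysis, with no appeal to the $R$-transform calculus.
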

\begin{proof}
To  determine transition probabilities of the Markov process $\Z_t$, we start from the Cauchy--Stieltjes transform
  \begin{equation}
    \label{CSK}
G_t(z):=\int \frac{\nu\topp{1/2}_t(dx)}{z-x}=
\frac{ t \sqrt{t^2-4 z}-t^2+2 z}{2z^2}
  \end{equation}
  of the free $1/2$-stable law \eqref{half-stable}.
  The Cauchy--Stieltjes transform of the closely related measure $\mu_t(A)=\nu\topp{1/2}_t(-A), A\in\calB(\R)$ appears explicitly in \citep[page 590]{bryc11one}.
 We will present a
 straightforward
 calculation of~\eqref{CSK} using basic complex analysis at the end of this section.

Next, we use the standard branch of the square root, and \eqref{CSK} simplifies to
\begin{equation}
  \label{CSK+}
  G_t(z)=-\frac{4}{\left(\sqrt{t^2-4 z}+t\right)^2}.
\end{equation}
The latter is the most convenient form for the equation \eqref{Biane1} which says  that $G_t(z)=G_s(F(z))$. Using \eqref{CSK+}, we first solve
$(\sqrt{t^2-4z}+t)^2 = (\sqrt{s^2-4F(z)}+s)^2$ for real $z<t/2$ fixed, seeking the real negative solution $F(z)<s/2$. The equation becomes
$$
t-s+\sqrt{t^2-4z}=\sqrt{s^2-4F(z)}.
$$
Since $s<t$, both sides are positive, so we get
\begin{equation}
  \label{F-sol}
  F(z)=\frac14\bb{s^2-\left(t-s+\sqrt{t^2-4z}\right)^2}.
\end{equation}
Formula \eqref{F-sol} has a unique analytic extension to all complex $z$ from the slit plane $\CC\setminus[t^2/4,\infty)$; the extension amounts to choosing the standard branch of the square root.
One can check that with this choice of the root, $F(z)$ given by \eqref{F-sol}  satisfies the uniqueness conditions \eqref{F-unique}.
Therefore,  \eqref{Biane2} determines the transition probabilities of the Markov process $(\Z_t)$ and  specifies their Cauchy--Stieltjes transform as
$$
\frac{4}{ s^2-4x -(t-s+\sqrt{t^2-4z})^2}.
$$
 The calculations turn out to be easier if we work with the process $\{\Z\topp{1/2}_{2t}-t^2\}_{t\ge0}$ by recasting \eqref{Biane2} via changing the variables in the above Cauchy--Stieltjes transform, first by replacing $s,t$ by $2s,2t$ and then replacing $x$ by $s^2+x$ and $z$ by $z+t^2$.
 This results in a somewhat simpler identity
\begin{equation}
  \label{Biane3}
  \int_{0}^\infty \frac1{z-y} p\topp{1/2}_{s,t}(x,y) dy= \frac{1}{ -x -(t-s+\sqrt{-z})^2} =: H_{s,t,x}(z)
\end{equation}
that we need to prove, with
$p\topp{1/2}_{s,t}(x,y)$ as in~\eqref{eq:p1/2}.
One way to verify \eqref{Biane3} is to apply the Stieltjes inversion formula and show:
$$-\frac{1}{\pi}\lim_{\eps\downarrow 0}\Im H_{s,t,x}(y+i \eps)=
p_{s,t}\topp{1/2}(x,y).
$$
This can be done by straight-forward calculation and is thus omitted.
\end{proof}

\begin{proof}[Proof of~\eqref{CSK}]
By self-similarity, it suffices to work with $t=1$. By definition,
\begin{multline*}
\int \frac{\nu_1\topp{1/2}(dx)}{z-x} = \int_{1/4}^\infty\frac{\sqrt{4x-1}}{2\pi x^2}\frac 1{z-x}dx
= \frac2\pi\int_0^\infty \frac{\sqrt y}{(y+1)^2}\frac 1{z-\frac{y+1}4}dy\\
= \frac4\pi\int_0^{\frac\pi2}\frac{\sin^2\alpha}{z-(4\cos^2\alpha)\inv}d\alpha = \frac1\pi\int_0^{2\pi}\frac{\sin^2\theta}{2(1+\cos\theta)z-1}d\theta,
\end{multline*}
where we used change of variables $4x-1\mapsto y, y\mapsto \tan^2\alpha, 2\alpha\mapsto \theta$ consecutively.
Transforming the last expression into a complex integral, we arrive at
\[
\int \frac{\nu_1\topp{1/2}(dx)}{z-x} = \frac1{\pi}\oint_{|\zeta|=1}\frac{\spp{\frac{\zeta-\wb\zeta}{2i}}^2}{2(1+\frac{\zeta+\wb\zeta}2)\zeta-1} \frac{d\zeta}{i\zeta} = -\frac1{4\pi i}\oint_{|\zeta|=1}\frac1z\frac{(\zeta^2-1)^2}{\zeta^2[\zeta^2+\frac{2z-1}z\zeta+1]}d\zeta.
\]
The integrand above has poles at
 \[
\zeta_0 = 0, \quad \zeta_1 = \frac{1+\sqrt{1-4z}}{1-\sqrt{1-4z}}\qmand \zeta_2 = \frac{1-\sqrt{1-4z}}{1+\sqrt{1-4z}},
 \]
 and $\zeta_0$ are $\zeta_2$ are within the unit disc for $z\in\C^+$ (we take the standard branch of square root).
We then write the complex integral as
 \[
 \oint_{|\zeta|=1}\frac{(\zeta^2-1)^2}{z\zeta^2(\zeta-\zeta_1)(\zeta-\zeta_2)}d\zeta =:\oint_{|\zeta|=1}h_z(\zeta)d\zeta,
 \]
 and obtain
 \[
 {\rm Res}_{\zeta_0}h_z = \frac1\zeta\pp{\frac1{\zeta_1}+\frac1{\zeta_2}} = \frac{1-2z}{z^2}\qmand {\rm Res}_{\zeta_1}h_z = \frac{(\zeta_2^2-1)^2}{z\zeta_2^2(\zeta_2-\zeta_2)} = -\frac{\sqrt{1-4z}}{z^2}.
 \]
The desired result now follows from the residue theorem:
\begin{multline*}
\int \frac{\nu\topp{1/2}_1(dx)}{z-x}= -\frac1{4\pi i}\oint_{|\zeta|=1}\frac1z\frac{(\zeta^2-1)^2}{\zeta^2[\zeta^2+\frac{2z-1}z\zeta+1]}d\zeta\\
= -\frac1{4\pi i}2\pi i ({\rm Res}_{\zeta_0}h_z + {\rm Res}_{\zeta_2}h_z) = \frac{\sqrt{1-4z}-1+2z}{2z^2}.
\end{multline*}
\end{proof}

\subsection*{Acknowledgements} WB thanks Chris Burdzy for pointing out  the close relation between the trajectories of the free Brownian motion  and the Cauchy process. YW's research was partially supported by NSA grant  H98230-14-1-0318.

\bibliographystyle{apalike}
\bibliography{references}

\end{document}